\newtheorem{theorem}{Theorem} 
\newtheorem{proposition}[theorem]{Proposition}
\newtheorem{lemma}[theorem]{Lemma}
\theoremstyle{definition}
\newcommand{\N}{\mathbf{N}}
\newcommand{\Z}{\mathbf{Z}}
\newcommand{\SW}{SW}
\renewcommand{\epsilon}{\varepsilon}
\newcounter{thmlistcnt}
	{\setcounter{thmlistcnt}{0}%
	\begin{list}{\emph{(\roman{thmlistcnt})}}{%
		\usecounter{thmlistcnt}%
		\setlength{\topsep}{0pt}%
		\setlength{\leftmargin}{36pt}%
		\setlength{\labelwidth}{17pt}%
		\setlength{\itemsep}{0pt}%
		\setlength{\itemindent}{0pt}}%
	}%
	{\end{list}}%
\newcommand{\wt}[1]{||#1||}
\subjclass[2010]{Primary 91A46, Secondary 05C57, 94A50}
\begin{document}
\title[The Majority Game]{The majority game with an arbitrary majority}
\author{John R. Britnell and Mark Wildon}
\date{\today}
\maketitle
\thispagestyle{empty}

\begin{abstract}
The $k$-majority game is played with~$n$ numbered balls, each coloured with one of two colours. It is given
that there are at least~$k$ balls of the majority colour, where $k$ is a fixed integer greater than $n/2$.
On each turn the player selects two balls to compare,
and it is revealed whether they are of the same colour; the player's aim is to determine a ball of the majority colour.
It has been correctly stated by Aigner that the minimum number of comparisons necessary to guarantee success is $2(n-k) - B(n-k)$, where $B(m)$ is the weight of the 
binary expansion of $m$. However his proof contains an error. We give an alternative proof of this result, which generalizes an argument of Saks and Werman.
\end{abstract}

\section{Introduction}

Fix $n$ and $k \in \N$ with $k > n/2$. The $k$-majority game is played with~$n$ 
numbered balls
which are each coloured with one of two colours. It is given
that there are at least~$k$ balls of the majority colour.
On each turn the player selects two balls to compare,
and it is revealed whether they are of the same colour, or of 
different colours. The player's objective is to determine a ball of the majority colour.
We write $K(n,k)$ for the minimum number of comparisons that will guarantee success. 
We write $B(m)$ for the number of digits $1$ in the binary representation of $m \in \N_0$.
The object of this paper is to prove the following theorem.

\begin{theorem}\label{thm:main}
If $n$, $k \in \N$ and $k \le n$ then $K(n,k) = 2(n-k) - B(n-k)$.
\end{theorem}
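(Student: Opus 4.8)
The plan is to pass to the $\F_2$-model of the game and reduce to a purely combinatorial merging game. Identify the two colours with $0,1\in\F_2$; comparing balls $i,j$ reveals $c_i+c_j$, so a set of comparisons is a graph on the balls whose information content is exactly the partition into connected components together with, on each component, the induced $2$-colouring up to a global flip. A comparison within an existing component conveys nothing, so we may assume every comparison merges two components, and after $q$ comparisons there are $n-q$ components. Describe each component by its two colour-classes of sizes $a_i\le b_i$, put $d_i=b_i-a_i$ and $P=\sum_i a_i$, and set $m=n-k$. A short counting argument (minimise the minority class subject to the promise that it has size at most $m$) shows that a ball is guaranteed to lie in the majority class precisely when it lies in the larger class of some component $i$ with $P+d_i>m$. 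Hence the player may stop exactly when $P+\max_i d_i>m$.

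Next I would record how a merge acts on the data $(P;\{d_i\})$. Comparing balls in components $i,j$ leaves the adversary free to answer so that the two classes align or anti-align: the multiset of imbalances has $d_i,d_j$ replaced either by $d_i+d_j$ (align, $P$ unchanged) or by $|d_i-d_j|$ (anti-align, $P$ raised by $\min(d_i,d_j)$), the choice lying with the adversary. The whole problem thus becomes the following game, started from $n$ ones with $P=0$: the player repeatedly names two imbalances, the adversary replaces them by their sum or their absolute difference (updating $P$), and the player wins once $P+\max_i d_i>m$. Writing $\Gamma=P+\max_i d_i-1$ for the current gain (so a win is $\Gamma\ge m$) and $g(G):=2G-B(G)$, the target is to show that the least number of merges forcing $\Gamma\ge G$ equals $g(G)$; taking $G=m$ gives the theorem, and in particular the value depends only on $m$, the remaining balls playing no role beyond supplying spare singletons of imbalance one.

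For the lower bound I would give an adversary strategy, in the spirit of Saks and Werman, governed by the superadditive weight $g$. The natural attempt is the invariant $\Gamma\le\gamma(q)$, where $q$ is the number of merges made and $\gamma(q)=\max\{G:g(G)\le q\}$; equivalently, the adversary never lets $g(\Gamma)$ outrun $q$. Faced with a merge of imbalances $x\ge y$, the adversary compares the two resulting gains and takes the smaller, and the content is that this smaller value never exceeds $\gamma(q{+}1)$. The engine of the calculation is how $g$ behaves under $+$ and $-$: aligning changes $\sum_i g(d_i)$ by exactly the number of binary carries in $x+y$, while anti-aligning decreases it by $2g(y)$ up to a carry term, and $g$ is chosen so that these carry terms bookkeep the deficit $2m-B(m)-m=m-B(m)$ of ``wasted'' merges. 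I expect this to be the main obstacle: the single statistic $\Gamma$ is almost certainly too weak, and the invariant must be upgraded to a potential on the full multiset of imbalances (built from $\sum_i g(d_i)$ together with $P$), so that imbalance the player has banked across several medium components is charged correctly before it can be consolidated.

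For the upper bound I would exhibit a matching strategy organised by the binary expansion $m=\sum_{j\in J}2^{j}$. The engine is always to merge two components of equal imbalance, so that the adversary's two options become symmetric: equal imbalances $2^{i}$ either align to $2^{i+1}$ or anti-align to $0$ while raising $P$ by $2^{i}$. A block lemma, proved by induction on $j$, then asserts that $g(2^{j})=2^{j+1}-1$ merges suffice to force $\Gamma$ up by $2^{j}$; concatenating one block for each $j\in J$ uses $\sum_{j\in J}(2^{j+1}-1)=2m-B(m)$ merges and drives $\Gamma$ to $m$, a win. The delicate point, dual to the lower bound, is to phrase the block lemma as an invariant on the whole configuration that genuinely composes — so that progress banked in $P$ or in a surviving high imbalance during one block is not silently undone while the next block is built — and it is exactly the discipline of only ever merging equal imbalances that denies the adversary the chance to cancel earlier gains.
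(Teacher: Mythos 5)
Your reformulation (components, imbalances $d_i$, the quantity $P$, and the stopping criterion $P+\max_i d_i>m$) agrees with the paper's, and your upper-bound sketch is essentially Aigner's block strategy, which the paper cites rather than reproves. The gap is in the lower bound, which is the paper's sole original contribution, and it sits exactly where you predicted: the invariant $\Gamma\le\gamma(q)$ is not merely ``almost certainly too weak'', it is provably unmaintainable. Take $n=7$, $k=4$, so $m=3$. After one merge both of the adversary's options give $\Gamma=1=\gamma(1)$, so a $\Gamma$-greedy adversary may move to $\{1^5,0\}$ with $P=1$; but from there a merge of two singletons yields $\Gamma=2$ whichever answer is given (align: $\{2,1^3,0\}$ with $P=1$; anti-align: $\{1^3,0,0\}$ with $P=2$), while $\gamma(2)=1$. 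So ``the smaller of the two resulting gains'' can exceed $\gamma(q+1)$, and the proposed engine stalls on move two. You acknowledge that the potential must be upgraded to one defined on the whole multiset of imbalances, but you never say what that potential is, and that is the entire content of the result; a potential assembled additively from $\sum_i g(d_i)$ and $P$ is not what the paper uses, and you give no evidence that any such additive potential exists.

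The paper's statistic is of a quite different kind: for a position $M$ with $\wt{M}=2s+e$ it takes $\delta_e(M)=\sum_N(-1)^{\wt{N}}$ over all submultisets $N$ whose complement outweighs them by at least $e$ (i.e.\ over consistent minority colourings), iterates the construction $e$ times to obtain $\delta^{(e)}_e$, and sets $\SW_e(M)=e+P\bigl(\delta^{(e)}_e(M)\bigr)$ with $P$ now the $2$-adic valuation. The exact conservation law $\delta_e(M)=\delta_e(M^+)+(-1)^{w'}\delta_e(M^-)$ combined with the ultrametric inequality $P(x+y)\ge\min\bigl(P(x),P(y)\bigr)$ gives $\SW_e(M)\ge\min\bigl(\SW_e(M^+),\SW_e(M^-)\bigr)$ with no case analysis of the adversary's answer; the starting value $e+B(n-k)$ falls out of $P\bigl(\tbinom{2s}{s}\bigr)=B(s)$; and the bound $\SW_e(M)\ge c$ at final positions needs a separate hyperderivative/Leibniz computation. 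None of this machinery, nor any substitute for it, appears in your proposal, so the necessity of $2(n-k)-B(n-k)$ comparisons remains unproved.
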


This theorem has been stated previously, as Theorem 3 of \cite[page 14]{Aigner}. We believe, however, that there is a flaw in the proof offered there
of the lower bound for $K(n,k)$, i.e.~the fact that $2(n-k) - B(n-k)$ comparisons are necessary.
The error arises in Case (ii) of the proof of Lemma 1, in which 
it is implicitly assumed that if it is optimal at some point 
for the player to compare balls~$i$ and~$t$, then there exist two balls~$j$ and~$\ell$ which it is optimal
to compare on the next turn, irrespective
of the answer received when balls $i$ and~$t$ are compared. The proof of Theorem~3 requires an analogue of 
Lemma~1, stated as Lemma~3, which inherits the same error. The argument for Lemma~1 of \cite{Aigner}
is based on Lemma~5.1 in~\cite{Wiener}, which contains the same flaw; the authors are grateful to Prof.~Aigner and Prof.~Wiener for confirming these errors.\footnote{Personal communications.}

In \cite{SaksWerman}, Saks and Werman 
have shown that
$K(2m+1,m+1) = 2m - B(m)$.
(An independent proof, using an elegant argument on the game tree,
was later given by Alonso, Reingold and Schott \cite{AlonsoEtAl}.) 
The original contribution of this paper is
to supply a correct proof that $2(n-k) - B(n-k)$ questions are necessary in the general case, by generalizing the argument of Saks and Werman~\cite{SaksWerman}. 




We remark that an alternative setting for the majority problem replaces the $n$ balls
with a room of $n$ people. Each person
is either a knight, who always tells the truth, or a knave, who always lies.
The question `Person $i$, is person $j$ a knight?'
corresponds to a comparison between balls $i$ and $j$. (The asymmetry in the 
form of the questions is therefore illusory.)
In \cite[Theorem 6]{Aigner}, Aigner gives a clever questioning
strategy which demonstrates that $2(n-k) - B(n-k)$ questions
suffice, even when knaves are replaced by spies (Aigner's unreliable people), who may answer as they see fit.
He subsequently uses his Lemma 3 to show that $2(n-k) - B(n-k)$ questions are also necessary;
our Theorem~\ref{thm:main} can be used to replace this lemma, and so repair
the gap in the proof of Theorem 6 of \cite{Aigner}.

We refer the reader to \cite{Aigner} and the recent preprint \cite{ChengEtAl} 
for a number of results on further questions that arise in this setting.

\section{Preliminary reformulation}
We begin with a standard reformulation of the problem 
that follows \hbox{\cite[\S 2]{Aigner}}. In the special case $n=2m+1$ and $k=m+1$, it
may also be found in \cite[\S 4]{SaksWerman} and \cite[\S 2, \S3]{Wiener}.
A position in a $k$-majority game corresponds to a graph on $n$ vertices, in
which there is an edge, labelled either `same' or `different', between vertices $i$ and $j$ if balls $i$ and $j$
have been compared. Each connected component of this graph
admits a unique bipartition into parts corresponding to balls
of the same colour. 
If $C$ is a component with bipartition $\{X, Y\}$
where $|X| \ge |Y|$ then we define the \emph{weight} of $C$ to be $|X| - |Y|$.

Suppose that the graph has distinct components $C$ and $C'$ of weights $w$ and $w'$ respectively,
where $w \ge w'$,
and that balls in $C$ and $C'$ are compared.
In the new question graph, 
$C$ and~$C'$ are united in a single component; it is easily checked that
this component has  weight either $w+w'$ or $w-w'$, depending
on which parts of $C$ and~$C'$ the balls lie in and which answer is given.
Moreover, if a game position has components
of weights $w_1, \ldots, w_c$ where $w_1 \ge w_2 \ge \ldots \ge w_c$
then exactly $n-c$ comparisons have been made.
Finally, if $e = k-(n-k)$ is the minimum
excess of the majority colour over the minority colour,
then $w_1 + \cdots + w_c = 2s + e$ for some $s \in \N_0$ and
the balls in the larger part of the component of weight $w_i$
can consistently be of the minority colour if and only if
$w_1 \le s$. (This is an equivalent condition to equation (14) in \cite{Aigner}.)




These remarks show that the $k$-majority game can be reformulated
as a two player adversarial game 
played on multisets of non-negative
integers, which we shall call \emph{positions}. As above, let $e = k - (n-k)$.
The starting
position is the multiset $\{1,\ldots, 1\}$ containing $n$ elements. In 
each turn, two distinct multiset elements $w$ and $w'$, with $w \ge w'$, 
are chosen by the \emph{Selector}, and the \emph{Assigner}
chooses to replace them with either $w+w'$ or $w-w'$.
The game ends as soon as a position $\{w_1,\ldots, w_c\}$ is reached such
that $w_1 \ge \ldots \ge w_c$ and
\[ w_1 \ge s+1, \]
where $s$ is determined by $2s+e = w_1+\cdots + w_c$.
Following \cite{SaksWerman}, we call
such positions \emph{final}. We
define the \emph{value} of a general position $M$ to be the
number of elements in  a final position reached from $M$, assuming,
as ever, optimal play by both sides. We denote the value of $M$ by $V(M)$.

The result we require, that $2(n-k) - B(n-k)$ questions are
necessary to identify a ball of the majority colour
in the $k$-majority game, is equivalent
to the following proposition.

\begin{proposition}\label{prop:bound}
Let $n \in \N$ and let $k > n/2$. The value of the 
starting position in the $k$-majority game
is at most $B(n-k) + k - (n-k)$.
\end{proposition}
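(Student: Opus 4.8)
The plan is to prove the equivalent assertion that the Assigner has a strategy forcing every reachable final position to have at most $B(n-k)+e$ parts, where $e=k-(n-k)$. Recall that the value satisfies $V(M)=|M|$ when $M$ is final, and otherwise $V(M)=\max_{w\ge w'}\min_{\pm}V(M')$, the maximum being over the Selector's choice of a pair and the minimum over the Assigner's replacement of $\{w,w'\}$ by $\{w+w'\}$ or $\{w-w'\}$. It therefore suffices to construct a weight $\wt{M}$ on positions with three properties: (i) if $M$ is final then $\wt{M}\ge|M|$; (ii) if $M$ is not final then for every selected pair $w\ge w'$ there is a sign for which the resulting position $M'$ satisfies $\wt{M'}\le\wt{M}$; and (iii) $\wt{\{1,\dots,1\}}=B(n-k)+e$. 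Granting these, I would prove $V(M)\le\wt{M}$ for every position by induction on the number of parts $|M|$, which strictly decreases at each move. The base case is property~(i); for a non-final $M$, every resulting $M'$ has one fewer part, and if the Assigner answers a given selection with the sign provided by~(ii), then $\min_{\pm}V(M')\le V(M')\le\wt{M'}\le\wt{M}$ by the inductive hypothesis, so that $V(M)\le\wt{M}$. Applying this to the starting position and invoking~(iii) gives the Proposition.

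The weight $\wt{M}$ will generalize the one used by Saks and Werman in the case $e=1$. Its principal term will be $B(s)$, where $s$ is fixed by $2s+e=\sum_i w_i$, since $s$ measures the balanced mass still present; note that $\sum_i w_i\equiv e\pmod 2$ is preserved throughout (addition leaves the sum fixed and subtraction changes it by $2w'$), so $s$ is always an integer, and that for a non-final $M$ one has $w_1\le s$, whence $s\ge0$ and $B(s)$ is defined. With this term alone $\wt{\{1,\dots,1\}}$ has principal part $B(n-k)$, so the $e$ surplus units must account for the remaining $e$. To this I would add correction terms recording exactly the degenerate features that inflate $|M|$ without advancing the balance — chiefly the parts equal to $0$ and the amount by which the dominant part $w_1$ exceeds $s$ — so that property~(i) survives even at \emph{inefficient} final positions, such as one consisting of a single large part that is not a power of two together with many leftover units, for which the bare quantity $B(s)+e$ is strictly smaller than $|M|$. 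Calibrating these corrections so that~(iii) stays exact while~(i) is never violated is the first technical task.

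The heart of the matter is property~(ii): from an arbitrary selection $w\ge w'$ at a non-final position the Assigner must possess a sign that does not increase $\wt{}$. For the addition move the sum, and hence $s$, is unchanged, so only the correction terms move; for the subtraction move the sum falls by $2w'$ and $s$ by $w'$, so the principal term changes by $B(s-w')-B(s)$, which I would control using the standard carry and subadditivity properties of the binary weight~$B$. The verification splits according to whether $w$ or $w'$ is the dominant part and whether the subtraction produces a new zero, and in each branch one must exhibit an admissible sign. I expect this case analysis — in particular the subtraction move for general excess $e$, which has no counterpart in the Saks–Werman setting — to be the main obstacle; it is precisely the point at which the arguments of Aigner and Wiener go astray, by assuming that the Assigner's optimal reply may be fixed independently of the later play. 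Once~(ii) is secured for every selection, the induction closes and the bound $V(\{1,\dots,1\})\le B(n-k)+e$ follows.
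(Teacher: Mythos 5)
Your reduction of the Proposition to the existence of a potential function $\Psi$ with properties (i)--(iii), followed by induction on the number of parts, is sound and is indeed the skeleton of the paper's argument. But the entire mathematical content of the result lies in actually constructing such a $\Psi$, and this is exactly what your proposal leaves open: you defer the construction (``calibrating these corrections \ldots is the first technical task''), and you defer the verification of property (ii) (``I expect this case analysis \ldots to be the main obstacle''). Worse, the candidate you sketch --- $B(s)$ plus local correction terms counting zero parts and the overshoot $w_1 - s$ --- is precisely the kind of combinatorial potential that is not known to work and that there is strong reason to distrust. Under a subtraction move $s$ drops to $s - w'$, and $B(s-w') - B(s)$ can be as large as roughly $\log_2 s$ in either direction (e.g.\ $s = 2^j$, $w' = 1$); ``carry and subadditivity properties of $B$'' give no usable one-sided control, and the whole reason Aigner's and Wiener's proofs fail is that no locally-verified sign choice of this kind has been made to work. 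The paper's Final Remark even shows that the correct statistic does not reduce to a simple local rule for the Assigner.

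The missing idea is the Saks--Werman mechanism that makes property (ii) come for free. The paper takes $\Psi(M) = \SW_e(M) = e + P\bigl(\delta^{(e)}_e(M)\bigr)$, where $\delta_e$ is a signed count of minority subpositions, $\delta^{(e)}_e$ is an iterated partial sum of these, and $P$ is the $2$-adic valuation. The conservation identity $\delta^{(e)}_e(M) = \delta^{(e)}_e(M^+) + (-1)^{w'}\delta^{(e)}_e(M^-)$ (Lemma~\ref{lemma:conserved} and $(\star)$) together with the ultrametric inequality $P(x+y) \ge \min\bigl(P(x), P(y)\bigr)$ yields $\SW_e(M) \ge \min\bigl(\SW_e(M^+), \SW_e(M^-)\bigr)$ for \emph{every} selected pair, with no case analysis whatsoever --- this is how the paper sidesteps the very obstacle you identify as the heart of the matter. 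Moreover property (i), that $\SW_e(M) \ge |M|$ at final positions, is not a matter of calibrating corrections: it requires its own substantial argument (the hyperderivative and factorization computation of \S 4 leading to $(\ddagger)$). Since your proposal contains neither the conservation-law idea nor any verified substitute for it, it does not constitute a proof.
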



\section{Generalized Saks--Werman statistics}

If $M$ is a position in a majority game and $N$ is a submultiset
of $M$ then we shall say that $N$ is a \emph{subposition} of $M$.
Let $\bar{N}$ denote the complement of $N$ in $M$ and let
$\wt{M}$ denote the sum of all the elements of $N$.
Let $\epsilon_M(N) = \wt{\bar{N}} - \wt{N}$.
For $e \in \N$ and a position $M$ such that $\wt{M}$ and 
$e$ have the same parity, we define 
\[ \delta_e(M) = \sum_N (-1)^{\wt{N}} \]
where the sum is over all subpositions $N$ of $M$ such that 
$\epsilon_M(N) \ge e$.
Thus a subposition of $M$ contributes to $\delta_e(M)$ if and only
if it corresponds to a colouring of the balls
in which the
excess of the majority colour over the minority colour is at least $e$.
We note that when $e=1$ 
we have
$\delta_1(M) = -f_M(-1)$ where $f_M$
is the polynomial defined in \cite[page 386]{SaksWerman}. 
(The reason
for working with 
minority subpositions rather than majority subpositions, as in \cite{SaksWerman},
will be seen in the proof of Lemma~\ref{lemma:alt}.)




The following lemma is a generalization of \cite[Lemma 4.2]{SaksWerman}.


\begin{lemma}\label{lemma:conserved}
Let $M$ be a position and let $e$ have the same parity as $\wt{M}$.
Let $w, w' \in M$ be two elements of $M$ with $w \ge w'$.
Let $M^+$ and $M^-$ be the positions obtained from $M$ if $w$ and $w'$
are replaced with $w+w'$ and $w-w'$, respectively. Then
\[ \delta_e(M) = \delta_e(M^+) + (-1)^{w'} \delta_e(M^-).\]
\end{lemma}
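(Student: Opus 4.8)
The plan is to prove the identity by partitioning the subpositions of each of $M$, $M^+$ and $M^-$ according to how they meet the two distinguished elements, and then matching the resulting contributions term by term. Write $M_0 = M \setminus \{w, w'\}$ for the part untouched by the move, so that every subposition $N_0$ of $M_0$ extends in four ways to a subposition of $M$: by adjoining neither of $w,w'$, only $w$, only $w'$, or both. The first observation I would record is that $\epsilon_M(N) = \wt{M} - 2\wt{N}$, since $\wt{\bar{N}} = \wt{M} - \wt{N}$; thus $\epsilon_M(N)$ depends on $N$ only through $\wt{N}$, and the defining condition $\epsilon_M(N) \ge e$ is simply a bound $\wt{N} \le s$ with $s = (\wt{M}-e)/2$ an integer by the parity hypothesis. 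This makes it easy to read off, for each of the four extensions, both the sign $(-1)^{\wt{N}}$ and the inequality that decides whether $N$ contributes to $\delta_e(M)$, all expressed in terms of $\wt{N_0}$ and $\epsilon_{M_0}(N_0)$.

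Next I would carry out the same bookkeeping for $M^+ = M_0 \cup \{w+w'\}$ and $M^- = M_0 \cup \{w-w'\}$, where now each $N_0$ has only two extensions (adjoin the new element or not). The heart of the argument is the following pairing. The ``neither'' and ``both'' extensions for $M$ reproduce exactly --- with identical signs and identical contributing conditions --- the two extensions for $M^+$, so their combined contribution is precisely $\delta_e(M^+)$. The remaining two extensions for $M$, namely ``only $w$'' and ``only $w'$'', must be matched against $M^-$: one checks that ``only $w'$'' pairs with the $M^-$-extension that \emph{omits} $w-w'$, while ``only $w$'' pairs with the one that \emph{adjoins} $w-w'$. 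In each pair the contributing conditions agree on the nose (both shift $\epsilon_{M_0}(N_0)$ by $\pm(w-w')$), and the signs agree after pulling out a global factor $(-1)^{w'}$, using $(-1)^{w'}(-1)^{w-w'} = (-1)^{w}$. Summing over all $N_0 \subseteq M_0$ then yields $\delta_e(M) = \delta_e(M^+) + (-1)^{w'}\delta_e(M^-)$.

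The step I expect to demand the only real care is this sign-and-threshold bookkeeping for the $M^-$ pairing: it is counterintuitive that ``only $w$'' should correspond to the $M^-$-subposition \emph{containing} $w-w'$ (and ``only $w'$'' to the one omitting it), and the factor $(-1)^{w'}$ surfaces precisely when the two halves are paired in this way. It is also here that the parity hypothesis is genuinely used, to guarantee that $\delta_e(M^-)$ is even defined: $\wt{M^-} = \wt{M} - 2w'$ has the same parity as $\wt{M}$, hence as $e$.

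Conceptually the whole argument can be repackaged as a single generating-function identity, which I would at least keep in mind as a sanity check. Setting $F_M(z) = \prod_{m \in M}\bigl(1 + (-z)^{m}\bigr) = \sum_N (-1)^{\wt{N}} z^{\wt{N}}$, the factorization $(-z)^{w} + (-z)^{w'} = (-z)^{w'}\bigl(1 + (-z)^{w-w'}\bigr)$ gives the clean relation $F_M(z) = F_{M^+}(z) + (-z)^{w'} F_{M^-}(z)$. Taking the partial sum of coefficients in degrees $\le s$ then recovers the recurrence: multiplication by $z^{w'}$ shifts the truncation of $F_{M^-}$ down to degree $\le s-w'$, which is exactly the threshold defining $\delta_e(M^-)$, while the $(-1)^{w'}$ comes from $(-z)^{w'}$. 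This viewpoint explains transparently why the sign and the threshold shift must appear together.
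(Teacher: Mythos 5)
Your proof is correct and follows essentially the same route as the paper: both arguments partition the subpositions of $M$ into four classes according to which of $w, w'$ they contain, match the ``both''/``neither'' classes with the subpositions of $M^+$ and the ``only $w$''/``only $w'$'' classes with the subpositions of $M^-$ (with exactly the pairing you describe, ``only $w$'' going to the subposition containing $w-w'$), and extract the global factor $(-1)^{w'}$ from the weight shift. Your generating-function reformulation is a pleasant sanity check but encodes the identical bijection.
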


\begin{proof}
Let $N$ be a subposition
of $M$ such that $\epsilon_M(N) \ge e$
and let $N^\star = N \backslash \{ w, w' \}$.
We consider four possible cases for $N$.
\begin{itemize}
\item[(a)] If $w \in N$ and $w' \in N$ then $\wt{N} = \wt{N^\star \cup \{w, w'\}}=  \wt{N^\star \cup \{w + w'\}}$ and
$\epsilon_M(N) = \epsilon_{M^+}(N^\star \cup \{w + w'\})$.
\item[(b)] If $w \not\in N$ and $w' \not\in N$ then $\wt{N} = \wt{N^\star}$ and 
$\epsilon_M(N) = \epsilon_{M^+}(N^\star)$.
\item[(c)] If $w \in N$ and $w' \not\in N$ then $\wt{N} = \wt{N^\star \cup \{w\}} = \wt{N^\star \cup \{w - w'\}} + w'$
and $\epsilon_M(N) = \epsilon_{M^-}(N^\star \cup \{w-w'\})$.
\item[(d)] If $w \not\in N$ and $w' \in N$ then $\wt{N} = \wt{N^\star \cup \{w'\}} = \wt{N^\star} + w'$ and
$\epsilon_M(N) = \epsilon_{M^-}(N^\star)$.
\end{itemize}
Thus $\delta_e(M^+) = \sum_{N} (-1)^{\wt{N}}$ where the sum is over all subpositions
$N$ of $M$ such that $\epsilon_M(N) \ge e$ and either (a) or (b) holds, and
$(-1)^{w'} \delta_e(M^-) = \sum_{N} (-1)^{\wt{N}}$ where the sum is over all subpositions
$N$ of $M$ such that $\epsilon_M(N) \ge e$ and either (c) or (d) holds.
\end{proof}

We now define a family of further statistics.
For $e\in \N$ and
a position $M$ such that $\wt{M}$ and $e$ have the same parity, define
$\delta_e^{(1)}(M) = \delta_e(M)$, and for $b \in \N$ such that $b \ge 2$ define recursively
\[ \delta^{(b)}_e(M) = \sum_{t \in \N_0} \delta^{(b-1)}_{e+2t}(M). \]
For an alternative expression for $\delta^{(b)}_e(M)$ see Lemma~\ref{lemma:alt}.


We note that if $e + 2t > \wt{M}$ 
then $\delta^{(b-1)}_{e+2t}(M) = 0$,
and so the sum defining $\delta^{(b-1)}_e(M)$ is finite. Since we only need positions whose sum of elements
is at most $n$, it follows by induction on $b$ that
%
$\delta^{(b)}_e$ is a linear combination of the statistics $\delta_d$
for $d \ge e$. Hence, if $M$, $M^+$, $M^-$ and $w$, $w'$ 
are as in Lemma~\ref{lemma:conserved}, we have
\[ \delta^{(b)}_e(M) = \delta^{(b)}_e(M^+) + (-1)^{w'} \delta^{(b)}_e(M^-) 
\tag{$\star$}  . \]
For $r \in \N$ let $P(r)$ denote the highest power of $2$ dividing $r$
and let \hbox{$P(0) = \infty$}. 
Let
\[ \SW^{(b)}_e(M) = e + P\bigl( \delta^{(b)}_e(M)\bigr) \]
where, as expected, we set $e + \infty = \infty$. Our key statistic is now
\[ \SW_e(M) = \SW^{(e)}_e(M). \]
We remark that if $\wt{M}$ is odd then $\SW_1(M) = \Phi(M)$ where $\Phi(M)$
is as defined in \cite[page 386]{SaksWerman}.

In \S 5 below we prove Proposition~\ref{prop:bound} by
using $\SW_e(M)$ to prove an upper bound on the value $V(M)$ of a position $M$. 
The key properties of $\SW_e(M)$ we require are
$(\star)$ and the values of $\SW_e(M)$ at starting and final positions. Starting
positions are dealt with in the following lemma, whose proof uses the basic identity
$\sum_{r=0}^s (-1)^r \binom{n}{r} = (-1)^s \binom{n-1}{s}$; see \cite[Equation 5.16]{CMath}.

\begin{lemma}\label{lemma:start}
Let $M_{\mathrm{start}}$ be the multiset containing $1$ with multiplicity $n$.
Suppose that $n = 2s+e$ where $s$, $e\in \N$. Then for any $b \in \N$ such that $b \le n$ we have
\[ \delta_e^{(b)}(M_\mathrm{start}) = (-1)^s \binom{n-b}{s}.\]
\end{lemma}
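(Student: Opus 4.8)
The plan is to proceed by induction on $b$, reducing each step to the binomial identity $\sum_{r=0}^s (-1)^r \binom{n}{r} = (-1)^s \binom{n-1}{s}$ recalled in the statement.

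First I would treat the base case $b = 1$, where $\delta^{(1)}_e = \delta_e$. Since $M_{\mathrm{start}}$ consists of $n$ balls each of weight $1$, a subposition $N$ is specified (up to the labelling of the balls) by its size $r$, there being $\binom{n}{r}$ subpositions of each size $r$, each with $\wt{N} = r$. For such an $N$ we have $\epsilon_{M_{\mathrm{start}}}(N) = (n-r) - r = n - 2r$, so the condition $\epsilon_{M_{\mathrm{start}}}(N) \ge e$ is equivalent to $r \le (n-e)/2 = s$. Hence
\[ \delta_e(M_{\mathrm{start}}) = \sum_{r=0}^s (-1)^r \binom{n}{r} = (-1)^s \binom{n-1}{s}, \]
which is the claimed formula for $b = 1$.

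For the inductive step I would fix $b \ge 2$ and assume the formula for $b - 1$ and \emph{every} admissible value of $e$ (equivalently, of $s$); proving the statement for all $e$ at once is what makes the induction go through. It is cleaner to record the dependence on $s$ rather than $e$: writing $g_b(s) = \delta^{(b)}_{n-2s}(M_{\mathrm{start}})$, the recursion $\delta^{(b)}_e(M) = \sum_{t \in \N_0} \delta^{(b-1)}_{e+2t}(M)$ becomes $g_b(s) = \sum_{t \ge 0} g_{b-1}(s - t)$. Because $\delta^{(b-1)}_{e+2t}(M_{\mathrm{start}}) = 0$ whenever $e + 2t > \wt{M_{\mathrm{start}}} = n$, i.e. whenever $s - t < 0$, the sum truncates and equals $\sum_{u=0}^s g_{b-1}(u)$. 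Substituting the inductive hypothesis $g_{b-1}(u) = (-1)^u \binom{n-b+1}{u}$ and applying the same identity, now with upper index $n - b + 1$, gives
\[ g_b(s) = \sum_{u=0}^s (-1)^u \binom{n-b+1}{u} = (-1)^s \binom{n-b}{s}, \]
completing the induction.

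The only real care needed is bookkeeping: tracking the change of variables between $e$ and $s$, confirming that the infinite sum defining $\delta^{(b)}_e$ truncates exactly where the partial-sum form of the identity applies, and keeping the parities of $e$ and $\wt{M}$ aligned throughout. The hypothesis $b \le n$ enters precisely at the inductive step, since the identity is applied with upper index $n - b + 1$, which must be a non-negative integer, and the final answer $\binom{n-b}{s}$ requires $n - b \ge 0$.
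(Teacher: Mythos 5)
Your proof is correct and follows essentially the same route as the paper's: the base case counts subpositions of $M_{\mathrm{start}}$ by size and applies the alternating partial-sum identity, and the inductive step reindexes the sum over $t$ (noting $\delta^{(b-1)}_{e+2t}$ corresponds to replacing $s$ by $s-t$) and applies the same identity with upper index $n-b+1$. Your explicit remark that the induction must be carried out simultaneously for all admissible $e$ is a useful clarification that the paper leaves implicit, but it does not change the argument.
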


\begin{proof}
When $b=1$ we have $\delta_e^{(1)} = \delta_e$.
A subposition $N$ of~$M_\mathrm{start}$
contributes to the sum defining $\delta_e(M)$ if and only if $\wt{N} \le s$.
Therefore
\[ \delta_e^{(1)}(M) = \sum_{r=0}^s (-1)^r \binom{n}{r} = (-1)^s \binom{n-1}{s}.\]
If $b \ge 2$ then, by induction, we have
\[ \delta_e^{(b)}(M) = \sum_{t \in \N_0} \delta_{e+2t}^{(b-1)}(M) =
\sum_{t=0}^s (-1)^{s-t} \binom{n-(b-1)}{s-t} = (-1)^s \binom{n-b}{s}
\]
again as required.
\end{proof}

It follows that if $M_\mathrm{start}$, $s$ and $e$ are as in Lemma~\ref{lemma:start}, then
$\SW_e(M_\mathrm{start}) = e + P \bigl( \binom{2s}{s} \bigr)$.
It is well known that
$P\bigl( \binom{2t}{t} \bigr) = B(t)$ for any $t \in \N$. (Two different
proofs are given in [2] and [4].) Hence 
\[ \tag{$\dagger$} \SW_e(M_\mathrm{start}) = e + B(s). \]

\section{Final positions}

Let $e = k - (n-k)$.
In this section we show that if $M$ is a final position containing exactly $c$ elements then
$\SW_e(M) \ge c$. The proof uses the \emph{hyperderivative} on the ring $\Z[x,x^{-1}]$ of integral
Laurent polynomials, defined on the monomial basis for $\Z[x,x^{-1}]$ by
\[ D^{(r)} x^p = \binom{p}{r} x^{p-r} \]
for $p \in \Z$ and $r \in \N_0$. (This extends the usual definition of the hyperderivative
for polynomial rings, given in \cite[page 303]{LidlNiederreiter}.) The key property we require is 
the following small generalization of \cite[Lemma~6.47]{LidlNiederreiter}.

\begin{lemma}\label{lemma:Leibnizrule}
Let $f,g \in \Z[x,x^{-1}]$ be Laurent polynomials. Let $r \in \N$. Then
\[ D^{(r)}(fg) = \sum_{t=0}^r D^{(t)}(f) D^{(r-t)}(g). \]
\end{lemma}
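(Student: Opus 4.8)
The plan is to reduce the identity to the case of monomials and then to recognize what remains as the Vandermonde convolution, extended to negative integer exponents. Since $D^{(r)}$ is defined on the monomial basis and extended $\Z$-linearly, and since both sides of the asserted formula are $\Z$-bilinear in the pair $(f,g)$, it suffices to verify the claim when $f = x^p$ and $g = x^q$ for arbitrary $p, q \in \Z$. This bilinearity reduction is the first and essentially the only structural step; everything else is a coefficient computation.

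With $f = x^p$ and $g = x^q$ the left-hand side is $D^{(r)}(x^{p+q}) = \binom{p+q}{r} x^{p+q-r}$, while the right-hand side expands as
\[ \sum_{t=0}^r \binom{p}{t} x^{p-t}\binom{q}{r-t} x^{q-r+t} = \Bigl( \sum_{t=0}^r \binom{p}{t}\binom{q}{r-t} \Bigr) x^{p+q-r}. \]
Comparing these, I see that the lemma is equivalent to the single numerical identity $\binom{p+q}{r} = \sum_{t=0}^r \binom{p}{t}\binom{q}{r-t}$, to be established for all $p,q \in \Z$ and all $r \in \N_0$.

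The step I expect to be the main (indeed the only) obstacle is exactly the point where this generalizes \cite[Lemma~6.47]{LidlNiederreiter}: because we work over $\Z[x,x^{-1}]$, the exponents $p$ and $q$ may be negative, so I cannot invoke the combinatorial Vandermonde identity directly. I would resolve this by a polynomial-identity argument: each side of $\binom{X+Y}{r} = \sum_{t=0}^r \binom{X}{t}\binom{Y}{r-t}$ is a polynomial in the two variables $X$ and $Y$ of degree at most $r$ in each, the identity is classical for all nonnegative integer values of $X$ and $Y$, and a polynomial identity valid on all of $\N_0 \times \N_0$ holds identically. Hence it persists for all integers $p, q$, which completes the proof. (Alternatively, one may compare the coefficient of $x^r$ on the two sides of the formal identity $(1+x)^p (1+x)^q = (1+x)^{p+q}$, interpreting each factor via the generalized binomial series; this gives the same conclusion for negative exponents.)
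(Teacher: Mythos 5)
Your proposal is correct and matches the paper's proof: both reduce by bilinearity to the case $f = x^p$, $g = x^q$ and then invoke the Chu--Vandermonde identity $\binom{p+q}{r} = \sum_{t=0}^r \binom{p}{t}\binom{q}{r-t}$. Your additional polynomial-identity argument for negative exponents is a detail the paper handles implicitly by citing the general (polynomial-argument) form of Vandermonde from \cite[Equation 5.22]{CMath}, so the two proofs are essentially identical.
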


\begin{proof}
By bilinearity it is sufficient to prove the lemma when $f= x^p$ and $g= x^{q}$ where $p$, $q \in \Z$. In this case the lemma follows from
\[ \binom{p+q}{r} = \sum_{t=0}^r \binom{p}{t} \binom{q}{r-t} \]
which is the Chu--Vandermonde identity; see \cite[Equation 5.22]{CMath}. 
\end{proof}

We also need an alternative expression for $\delta^{(b)}_e(M)$.
Let $\alpha_r(M)$ be the number of subpositions $N$ of a position $M$ such that $\wt{N} = r$. 
The proof of the next lemma uses the basic identity 
$\sum_{d=r}^n \binom{d}{r} = \binom{n+1}{r+1}$; see \cite[Table~174]{CMath}. 

\begin{lemma}\label{lemma:alt}
Let $M$ be a position such that $\wt{M} = 2s + e$. Then
\[ \delta^{(b)}_e(M) = \sum_{r=0}^{s} \binom{s+b-1-r}{b-1} (-1)^{r} \alpha_r(M). \] 
\end{lemma}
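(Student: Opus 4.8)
The plan is to prove the identity by induction on $b$, with the recursion $\delta^{(b)}_e(M) = \sum_{t \in \N_0} \delta^{(b-1)}_{e+2t}(M)$ driving the inductive step and the stated hockey-stick identity $\sum_{d=r}^n \binom{d}{r} = \binom{n+1}{r+1}$ collapsing the resulting double sum.

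First I would settle the base case $b=1$, where $\delta^{(1)}_e(M) = \delta_e(M)$. For a subposition $N$ of $M$ we have $\epsilon_M(N) = \wt{\bar N} - \wt{N} = \wt{M} - 2\wt{N} = 2s + e - 2\wt{N}$, so the condition $\epsilon_M(N) \ge e$ defining the sum is equivalent to $\wt{N} \le s$. Grouping subpositions by their weight $r = \wt{N}$ therefore gives $\delta_e(M) = \sum_{r=0}^{s} (-1)^r \alpha_r(M)$, which is precisely the claimed formula for $b=1$ since $\binom{s-r}{0} = 1$ for each $r$ in range.

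For the inductive step, I would assume the formula for $b-1$ and apply it inside the recursion. When $e$ is replaced by $e+2t$, the parameter $s = (\wt{M}-e)/2$ is replaced by $s-t$, so the induction hypothesis yields
\[ \delta^{(b-1)}_{e+2t}(M) = \sum_{r=0}^{s-t} \binom{s-t+b-2-r}{b-2} (-1)^{r} \alpha_r(M). \]
Summing over $t$ (the terms with $t > s$ vanish) and exchanging the order of summation so that $r$ runs outermost, the inner sum over $t$ for fixed $r$ runs over $0 \le t \le s-r$, giving
\[ \delta^{(b)}_e(M) = \sum_{r=0}^{s} (-1)^{r}\alpha_r(M) \sum_{t=0}^{s-r} \binom{s-r-t+b-2}{b-2}. \]
Reindexing the inner sum by $d = s-r-t+b-2$ turns it into $\sum_{d=b-2}^{s-r+b-2} \binom{d}{b-2}$, which by the hockey-stick identity equals $\binom{s-r+b-1}{b-1}$. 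Substituting this back recovers the claimed expression.

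The only delicate point, and the main place to take care, is the index bookkeeping when the summations are exchanged: one must verify that the range $0 \le t \le s-r$ is correct and that the reindexed inner sum matches the hockey-stick identity with $r = b-2$ and upper limit $n = s-r+b-2$. Everything else is routine substitution, and the parity hypothesis $\wt{M} = 2s+e$ ensures all the intermediate statistics $\delta^{(b-1)}_{e+2t}$ are defined.
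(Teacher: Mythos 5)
Your proof is correct and follows essentially the same route as the paper's: the base case via the observation that $\epsilon_M(N) \ge e$ is equivalent to $\wt{N} \le s$, then induction on $b$ through the defining recursion, an exchange of the order of summation, and the hockey-stick identity to collapse the inner sum. The index bookkeeping you flag as the delicate point checks out ($d$ runs from $b-2$ to $s-r+b-2$, giving $\binom{s-r+b-1}{b-1}$), so nothing further is needed.
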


\begin{proof}
When $b=1$, we have $\delta^{(1)}_e(M) = \delta_e(M) = \sum_{r=0}^s (-1)^r \alpha_r(M)$.
If $b \ge 2$, then by induction, we have
\begin{align*} 
\delta_e^{(b)}(M) &= \sum_{t \in \N_0} \delta_{e+2t}^{(b-1)}(M) \\
&= \sum_{t=0}^s \sum_{r=0}^{s-t} \binom{s-t+b-2-r}{b-2} (-1)^r \alpha_r(M) \\
&= \sum_{r=0}^s 
\sum_{t =0}^{s-r} \binom{s-t+b-2-r}{b-2}   
(-1)^r \alpha_r(M) \\
&= \sum_{r=0}^s  \binom{s+b-1-r}{b-1} (-1)^r \alpha_r(M)
\end{align*}
as claimed.
\end{proof}

Now let $M = \{w_1, \ldots, w_c\}$ be a final position in the $k$-majority game
where $w_1 \ge \ldots \ge w_c$. Let $\wt{M} = 2s+e$. 
As remarked in \S 2, we have
\[ w_1 \ge s+1.  \]
Let
\[ g = x^{s+e-1} (1+x^{-w_2}) \ldots (1+x^{-w_c}). \]
and note that, since $w_2 + \cdots + w_c \le s+e-1$,  $g$ is a polynomial. Let
\[ g = \sum_{r=0}^{s+e-1} \alpha'_r(M) x^{s+e-1-r}. \]

If $r \le s$ then a subposition $N$ of $M$ such that $\wt{N} = r$ cannot contain $w_1$. Therefore
$\alpha'_r(M) = \alpha_r(M)$ whenever $r \le s$.
It follows that
\[ (D^{(e-1)} g) (-1) = \sum_{r=0}^{s} \binom{s-r+e-1}{e-1} (-1)^{s-r} \alpha_r(M). \]
Hence, by Lemma~\ref{lemma:alt}, we have 
\[ (D^{(e-1)} g) (-1) =  \delta^{(e)}_e(M). \]
(The normal derivative would introduce an unwanted $(e-1)!$ at the point.)
It follows from Lemma~\ref{lemma:Leibnizrule}, applied to the original
definition of $g$, that $D^{(e-1)}(g)$ is
a linear combination, with coefficients in $\Z$, of polynomials of the form
\[ h =  x^{s+e-1-a_1}
\prod_{i \in A} x^{-w_i-a_i}
\prod_{j \in B} (1+x^{-w_j}) 
\]
where $A$ is a subset of $\{2,\ldots, n\}$, $B = \{2,\ldots, n\}\backslash A$,
and $a_1 + \sum_{i \in A} a_i = e-1$ where each summand is non-negative.
It is clear that $h(-1) = 0$ unless $w_j$ is even for all $j \in B$, in which
case $h(-1) = \pm 2^{|B|}$. Since $|B| \ge (c-1) - (e-1) = c-e$, it follows
that $P\bigl( h(-1) \bigr) \ge c-e$ for all such polynomials $h$. 
Hence
\[ \tag{$\ddagger$}
\SW_e(M) = e + P\bigl( \delta^{(e)}_e(M)\bigr) = e + P\bigl( (D^{(e-1)} g) (-1) \bigr)
\ge c \]
as claimed at the start of this section.

\section{Proof of Proposition~\ref{prop:bound}}

We are now ready to prove Proposition~\ref{prop:bound}.
Let $e = k - (n-k)$ be the minimum excess of the majority colour over the minority colour
in the $k$-majority game with $n$ balls.

Let $M$ be a position. Suppose that an optimal
play for the Selector is to choose $w$ and $w' \in M$.
Since the Assigner wishes to minimize the number of elements in the final position, we have
\[ V(M) = \min \bigl( V(M^+), V(M^-) \bigr) \]
where $M^+$ and $M^-$ are as defined in Lemma~\ref{lemma:conserved}.
If $x, y \in \Z$ then $P(x+y) \ge \min \bigl( P(x), P(y) \bigr)$.
Hence ($\star$) in \S 3 implies that
\[ \SW_e(M) \ge \min\bigl( \SW_e(M^+), \SW_e(M^-) \bigr). \]
By ($\ddagger$) at the end of \S 4, if $M$ 
is a final position containing $c$ elements then $\SW_e(M) \ge c$.
In this case $V(M) = c$, so we have $\SW_e(M) \ge V(M)$.
It therefore follows by induction that
\[ \SW_e(M) \ge V(M) \]
for all positions $M$. It was seen in $(\dagger)$ at the end of \S 3 that
if $M_\mathrm{start}$ is the starting position then
$\SW_e(M_{\mathrm{start}}) = B(n-k) + e$ and so
\[ B(n-k) + e \ge V(M_{\mathrm{start}})  \]
as required.

\section{Final remark}
We end by showing that the statistics $\SW_e(M)$ do not
predict all optimal moves for the Assigner. 
We need the following lemma in the case when $m$ is odd; it can
be proved
in a similar way to Lemma~\ref{lemma:start}.

\begin{lemma}\label{lemma:last}
For any $m \in \N$ we have 
\[ \SW_1\bigl( \{2,1^{2m-1}\} \bigr) = \begin{cases} 2  + B(m-1) + P(m-1) & \text{if $m$ is odd} \\
                                         2 +  B(m-1) - P(m) & \text{if $m$ is even.} 
                                         \end{cases}  \]                                         
\end{lemma}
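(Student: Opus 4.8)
The plan is to compute $\SW_1\bigl(\{2,1^{2m-1}\}\bigr) = 1 + P\bigl(\delta_1(\{2,1^{2m-1}\})\bigr)$ by first obtaining an explicit formula for $\delta_1(M)$ with $M = \{2, 1^{2m-1}\}$, and then reading off the highest power of $2$ dividing it. Since $e = 1$, we have $\SW_1(M) = \SW_1^{(1)}(M) = 1 + P(\delta_1(M))$, so only the single statistic $\delta_1$ is needed, not the higher $\delta^{(b)}$. Note $\wt{M} = 2 + (2m-1) = 2m+1$ is odd, matching $e = 1$, and writing $\wt{M} = 2s+1$ gives $s = m$.

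First I would count subpositions by weight. A subposition $N$ of $M$ either omits the element $2$ or includes it; if it omits $2$ it is determined by choosing $j$ of the $2m-1$ copies of $1$, contributing weight $j$ and sign $(-1)^j$, while if it includes $2$ it contributes weight $j+2$ and sign $(-1)^{j+2} = (-1)^j$. Using $\alpha_r(M) = \binom{2m-1}{r} + \binom{2m-1}{r-2}$ and the condition $\wt{N} \le s = m$ from the reformulation in \S 2, I would write
\[ \delta_1(M) = \sum_{r=0}^{m} (-1)^r \alpha_r(M) = \sum_{r=0}^{m} (-1)^r \binom{2m-1}{r} + \sum_{r=2}^{m} (-1)^r \binom{2m-1}{r-2}. \]
Applying the partial-alternating-sum identity $\sum_{r=0}^{s}(-1)^r\binom{n}{r} = (-1)^s\binom{n-1}{s}$ (the same identity used in Lemma~\ref{lemma:start}) to the first sum with $n = 2m-1$, $s = m$, and to the second sum after reindexing with $s = m-2$, I would collapse both sums into single binomial coefficients. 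This should yield $\delta_1(M)$ as a short $\Z$-linear combination of binomial coefficients of the form $\pm\binom{2m-2}{m}$ and $\pm\binom{2m-2}{m-2}$, which can be simplified to a clean closed form.

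The main work is then the $2$-adic valuation computation, splitting on the parity of $m$. Once $\delta_1(M)$ is expressed as a product or small combination of central-type binomial coefficients, I would invoke the known fact $P\bigl(\binom{2t}{t}\bigr) = B(t)$ (quoted in \S 3) together with standard manipulations relating $\binom{2m-2}{m}$ to $\binom{2m-2}{m-1}$ via the factor $\tfrac{m-1}{m}$, whose valuation contributes the $\pm P(m-1)$ or $\mp P(m)$ terms. The parity of $m$ controls whether these factors combine constructively or whether a cancellation forces a different valuation, which is exactly the source of the two cases in the statement: the $+P(m-1)$ for $m$ odd versus the $-P(m)$ for $m$ even.

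The hard part will be the careful bookkeeping of the $2$-adic valuation across the two parity cases, since adding two binomial coefficients of nearly equal valuation can produce cancellation that raises the valuation unpredictably; the sign interplay between $(-1)^m$ and the combinatorial identities must be tracked precisely. I would therefore isolate the closed form for $\delta_1(M)$ first and verify it numerically for small $m$ (say $m \le 5$) before committing to the valuation analysis, so that any sign or indexing slip in the binomial collapse is caught before it propagates into the delicate valuation step.
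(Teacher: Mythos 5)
Your approach is correct and is exactly the route the paper intends (the paper omits the proof of this lemma, saying only that it can be proved in a similar way to Lemma~\ref{lemma:start}): carrying out your collapse gives $\delta_1(M)=(-1)^m\bigl(\binom{2m-2}{m}+\binom{2m-2}{m-2}\bigr)=(-1)^m\,2\binom{2m-2}{m}$, and since the two binomial coefficients are equal by symmetry the cancellation you worry about cannot occur. The valuation then follows, just as you outline, from $\binom{2m-2}{m}=\frac{m-1}{m}\binom{2m-2}{m-1}$ together with $P\bigl(\binom{2m-2}{m-1}\bigr)=B(m-1)$, the two cases arising because exactly one of $P(m-1)$, $P(m)$ vanishes according to the parity of $m$.
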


Let $m \equiv 3$ mod $4$ and let $M = \{1^{2m+1}\}$ be the 
starting position in the majority game with $n=2m+1$ and $k=m+1$. 
The positions the Assigner can choose between on the first move in the game
are
$M^+ = \{2,1^{2m-1} \}$ and $M^- = \{1^{2m-1},0\}$.
By Lemma~\ref{lemma:last}, we have 
\[ \SW_1(M^+) = 2 + B(m-1) + P(m-1) = 3 + B(m-1) = 2 + B(m).\] 
It is clear that removing a zero element from a position decreases 
its $\SW_1$ statistic by~$1$, so by ($\dagger$) at the end of \S 3 we have
\[ \SW_1(M^-) = 1 + \SW_1( \{1^{2m-1} \}) = 1 + 1 + B(m-1) = 1 + B(m).\]
Using the $\SW_1$ statistic, the Assigner will therefore choose
$M^-$ on the first move.
 However Lemma~\ref{lemma:last} implies that
$\SW_1(\{2,1^{2m-3},0\}) = 1 + B(m)$, and we have already seen that
$\SW_1\bigl( \{1^{2m-1}\}\bigr) = B(m)$.
It follows that playing to $M^+$ is also an optimal
move for the Assigner.


%


\def\cprime{$'$} \def\Dbar{\leavevmode\lower.6ex\hbox to 0pt{\hskip-.23ex
  \accent"16\hss}D}
\providecommand{\bysame}{\leavevmode\hbox to3em{\hrulefill}\thinspace}
\renewcommand{\MR}[1]{\relax }
\providecommand{\MRhref}[2]{%
  \href{http://www.ams.org/mathscinet-getitem?mr=#1}{#2}
}
\providecommand{\href}[2]{#2}

\end{document}